\definecolor{bleu2}{rgb}{0.8,0.83,1}
\definecolor{bleu3}{rgb}{0,0.2,1}
\definecolor{B2}{rgb}{0,0,0.7}
\definecolor{bleu-fonce}{rgb}{0,0,0.7}
\definecolor{bleu-nuit}{RGB}{1,25,147}
\definecolor{turquoise}{rgb}{0,1,1}
\definecolor{bleu}{rgb}{0,0,1}
\definecolor{vert}{rgb}{0,1,0}
\definecolor{vert2}{rgb}{0.86,1,0}
\definecolor{vert3}{rgb}{0.6,1,0.6}
\definecolor{vert-fonce}{rgb}{0,0.5,0}
\definecolor{vert-moy}{rgb}{0,0.7,0}
\definecolor{vert-clair}{rgb}{0.83,1,0.87}
\definecolor{J2}{rgb}{1,0.6,0}
\definecolor{jaune}{rgb}{1,1,0}
\definecolor{VI}{rgb}{0.5,0,0.5}
\definecolor{rose-vif}{RGB}{255,64,255}
\definecolor{ornage}{rgb}{1,0.75,0.5}
\definecolor{marron}{rgb}{0.5,0.25,0.1}
\definecolor{R2}{rgb}{1,0,0}
\definecolor{rose}{rgb}{1,0,1}
\definecolor{rouge-fonce}{rgb}{0.5,0,0}
\definecolor{rouge-moy}{rgb}{0.75,0,0}
\definecolor{rouge-1}{rgb}{1,0.65,0.65}
\definecolor{cadre-1}{rgb}{1,0.4,0.4}
\definecolor{violet}{rgb}{0.75,0.5,1}
\definecolor{W2}{rgb}{0.9,0.9,0.9}
\definecolor{gris-1}{rgb}{0.92,0.92,0.92}
\definecolor{gris-2}{rgb}{0.95,0.95,0.95}
\definecolor{gris-3}{rgb}{0.3,0.3,0.3}
\definecolor{gris-4}{rgb}{0.65,0.65,0.65}
\renewcommand{\thefigure}{\ifnum \c@section>\z@ \thesection.\fi
 \@arabic\c@figure}
\numberwithin{equation}{section} 
\theoremstyle{plain}
	\newtheorem{thm}{Theorem}[section]
	\newtheorem{prop}{Proposition}[section]
         \newtheorem{lem}{Lemma}[section]
         \newtheorem{assu}{Assumption}[section]
\theoremstyle{definition}
\theoremstyle{remark}
	\newtheorem{ex}{\textbf{Example}}[section]
\newcommand{\flot}{\phi}
\newcommand{\noyau}{Q}
\newcommand{\tauxdesaut}{l}
\newcommand{\tauxdecroissance}{r}
\newcommand{\HH}{\mathbf{H}}
\newcommand{\II}{\mathbf{I}}
\newcommand{\KK}{\mathbf{K}}
\newcommand{\LL}{\mathbf{L}}
\newcommand{\JJ}{\mathbf{J}}
\newcommand{\traitmarque}{\chi}
\newcommand{\Zmarque}{\mathcal{Z}}
\newcommand*{\Vmarque}[1]{\mathcal{V}_{#1}}
\newcommand{\flotmesure}{\Phi}
\newcommand{\tauxdesautmesure}{\lambda}
\newcommand{\Tauxdesautmesure}{\Lambda}
\newcommand{\noyaumesure}{\mathcal{Q}}
\newcommand{\HHmesure}{\mathbf{H}}
\newcommand{\IImesure}{\mathbf{I}}
\newcommand{\KKmesure}{\mathbf{K}}
\newcommand{\LLmesure}{\mathbf{L}}
\newcommand{\JJmesure}{\mathbf{J}}
\newcommand*{\Vmesure}[1]{\mathbb{V}_{#1}}
\newcommand{\traitaug}{\wt{X}}
\newcommand{\flotaug}{\wt{\Phi}}
\newcommand{\tauxdesautaug}{\wt{\lambda}}
\newcommand{\noyauaug}{\wt{\mathcal{Q}}}
\newcommand{\proba}{\mathbb{P}}
\newcommand{\Esp}{\mathbb{E}}  
\newcommand{\RR}{\mathbb{R}}
\newcommand{\NN}{\mathbb{N}}
\newcommand{\Exp}{\mathrm{e}}
\newcommand{\NNN}{\mathfrak{N}}
\newcommand{\MMM}{\mathfrak{M}}
\newcommand{\indic}{\mathds{1}}
\renewcommand{\epsilon}{\varepsilon}
\renewcommand{\leq}{\leqslant}
\renewcommand{\geq}{\geqslant}
\newcommand*{\wt}[1]{\widetilde{#1}}
\newcommand*{\ali}[1]{\[\begin{aligned}#1\end{aligned}\]}
\newcommand*{\fonction}[5]{ \begin{array}{ccccc}
#1 & : & #2 & \to & #3\\
 & & #4 & \mapsto & #5\\
\end{array} }
\begin{document}
\title{  \textbf{Optimal stopping for measure-valued piecewise deterministic Markov processes\footnote{This  work  was  partially  supported  by  R\'egion  Languedoc-Roussillon  and  FEDER  under
grant Chercheur(se)s d’Avenir, project PROMMECE}
} }
\author{Bertrand Cloez\footnote{MISTEA, INRA, Montpellier SupAgro, Univ Montpellier, Montpellier, France}, 
Benoîte de Saporta\footnote{IMAG, Univ Montpellier, CNRS, Montpellier, France}, 
Maud Joubaud\footnote{IMAG, Univ Montpellier, CNRS, Montpellier, France}}
\date{\today}

\maketitle

\begin{abstract}
This paper investigates the random horizon optimal stopping problem for measure-valued piecewise deterministic Markov processes (PDMPs).
This is motivated by population dynamics applications, when one wants to monitor some characteristics of the individuals in a small population. The population and its individual characteristics can be represented by a point measure.
We first define a PDMP on a space of locally finite measures.
Then we define a sequence of random horizon optimal stopping problems for such processes.
We prove that the value function of the problems can be obtained by iterating some dynamic programming operator.
Finally we prove on a simple counter-example that controlling the whole population is not equivalent to controlling a random lineage.
\end{abstract}

\newpage

\tableofcontents

\newpage
%
\section{Introduction}
%
Piecewise deterministic Markov processes (PDMPs) form a general class of non diffusion processes that was introduced by M. Davis in the 80's \cite{davis84,davis93}. 
Such processes have deterministic trajectories punctuated by random jumps. 
They belong to the family of hybrid processes with a discrete component called mode or regime interacting with a Euclidean component. 
PDMPs can model a wide area of phenomena from insurance and queuing problems \cite{davis93}, finance \cite{bauerle11}, reliability \cite{PDP15} to neuroscience \cite{GT14,RTW12}, population dynamics \cite{BL16, CDGMMY} 
and many other fields. 
In this paper we are especially interested in population dynamics applications. In this area, special cases of PDMPs include for instance growth-fragmentation processes 
for one or several interacting species \cite{CCF16, Cl17,Cos16,DHKR15}. In that case, commonly, the deterministic part is the growth process that may depend e.g. on the age of the individual, on its size, on the quantity of available nutriment and the jumps correspond to fragmentation or division (for cells), birth or death events, abrupt changes in the environment,\ldots

In Davis' original construction, PDMPs are defined on subsets of $\mathbb{R}^d$, for some dimension $d$ that may change 
when the process jumps. 
In this paper we are interested in extending the definition of PDMPs to measure-valued state spaces.
Infinite dimensional PDMPs have already been introduced in \cite{BR11} (see also \cite{GT14, RTW12}). In those papers, PDMPs take values in a separable Hilbert space and model spatio-temporal phenomena occurring on neuronal membranes. Our approach differs as we are interested in measure-valued PDMPs to deal with population dynamics models. Instead of modeling the dynamics of only a single individual by a finite-dimensional  PDMP, we aim at taking into account simultaneously the dynamics of all the individuals in the branching population when the population remains small and the stochastic approach is relevant and large scale approximations do not hold. Such a population can be represented by a point measure, 
hence the need to define measure-valued PDMPs.
The measure-valued process representation in population dynamics is used e.g. in  \cite{Ber06, FM04}, with fragmentation-type processes. It is a particular case of measure-valued PDMPs, with no deterministic dynamics between jumps and exponential distributions for the jump times. 

After constructing measure-valued PDMPs, we define a sequence of random horizon optimal stopping problems for measure-valued PDMPs and prove that the value functions can be obtained by iterating some dynamic programming operator. We also exhibit a sequence of $\epsilon$-optimal stopping times. Our approach is based on \cite{gugerli} that solved the optimal stopping problem for finite-dimensional PDMPs.

When dealing with a branching population, some important characteristics of the global population, e.g. laws of large numbers for functionals of the individuals, can be obtained by simply studying a suitably weighted random tagged lineage, by means of many-to-one formulas, see e.g. \cite{Ber06, DHKR15,Guy07}. Here, we prove that this property does not hold true for the optimal stopping problem. We provide a simple counter-example of cell division where stopping a suitably chosen tagged cell and the whole population yield different value functions.

The paper is organized as follows. In Section \ref{sec:defPDMP}, we construct measure-valued PDMPs. In Section \ref{sec:guger}, we state and solve the optimal stopping problem for such processes. Finally in Section \ref{sec:compa} we compare the value functions of the optimal stopping problems for the whole population and a tagged lineage.
%
\section{Construction of measure-valued PDMPs}
\label{sec:defPDMP}
%
This section is dedicated to the construction of piecewise deterministic Markov processes taking values in some measure space. 
Our construction of measure-valued PDMPs follows the same lines as in  \cite{davis84}: we first define the hybrid state space in which the process evolves, then we define the local characteristics giving the dynamics of trajectories between jumps, the jump times and the post-jump locations, and prove that a strong Markov process with such characteristics can be constructed. Finally we provide some toy example of such processes.
We start by setting some notation that will be used throughout the paper.
%
\subsection{Notation} 
Let $d$ be a positive integer. We denote by $\mathcal{B}$ the $\sigma$-field of Borel sets on  $\RR^d$ and $\mathcal{B}_b$ its subset of bounded Borel sets. More generally, For any topological space $E$,  we denote by $\mathcal{B}(E)$ its Borel $\sigma$-field, $B(E)$ its set of measurable bounded real-valued functions, $\overline{E}$ its closure, and $\partial E$ its boundary.

Let $\MMM$ be the set of locally finite measures on $(\RR^d,\mathcal{B})$ and $\NNN=\{\mu\in\MMM$;  $\forall B \in \mathcal{B}_b$, $\mu(B) \in \NN\}$ be the set of locally finite point measures. 
Note that any $\mu$ in $\mathfrak{N}$ can be expressed as a (possibly infinite) sum of Dirac distributions; the Dirac distribution with point mass at $x$, for $x$ in $\RR^d$, will be denoted by $\delta_x$.

Let $C_c(\RR^d)$ be the set of continuous real-valued functions with compact support on $\RR^d$. For any measure $\mu\in\MMM$ and function  $f\in C_c(\RR^d)$ set
\begin{equation*}
\mu f := \int_{\RR^d} f(x) \mu(dx).
\end{equation*}
We endow $\MMM$ with the vague topology. Recall that the vague convergence for a sequence of measures $(\mu_n)_n\subset \MMM$ to a measure $\mu\in\MMM$ is defined by
\begin{equation*}
\mu_nf\xrightarrow[n\rightarrow \infty]{} \mu f,\qquad  \forall f\in C_c(\RR^d).
\end{equation*}
We denote it $\mu_n \xrightarrow[n\rightarrow \infty]{v} \mu $. It is easily seen that  that $\NNN$ is a closed subset of $\MMM$ for the vague topology.\\

For any real numbers $a$ and $b$, $a \vee b$ and $a \wedge b $denote the maximum and minimum respectively between $a$ and $b$.
%
\subsection{State space of measure-valued PDMPs} 
%
Let $K$ be a finite set 
called modes or regimes space. For any mode $v \in K$, let $E_v$ be an open subset of $\MMM$, representing the state space in mode $v$. The global state space is then
\begin{equation*}
E=\{(v,\zeta) \in K \times \MMM \ | \ v\in K, \zeta \in E_v \}. 
\end{equation*}
We endow this set $E$ with the $\sigma$-field $\mathcal{E}$ generated by sets of the form $A=\{v\}\times A_v$ for all Borel sets $A_v\in \mathcal{B}(E_v)$. 
For instance, if one can consider the temporal evolution of a cell population characterized by state variables such as age, size, growth, maturity, protein content... Then the quantity $\zeta\in \NNN$ represents such state variables and the mode can be the experiment conditions. In the following example, we consider a simple case with only one mode and the variable state corresponds to the cell size.

\begin{ex} \label{ex:notation} 
Consider a population of 3 individuals at a given time. Their sizes are denoted by $(x_i)_{1\leq i \leq 3}$. This population is identified with the measure
\begin{equation*}
 \zeta=\sum_{i=1}^3 \delta_{x_i} \in \NNN. 
 \end{equation*}
It gives a complete view of the population: all the information is contained in $\zeta$. As time goes by, the number of individuals may increase or decrease leading to more or less terms in this Dirac sum representation but it will remain a measure in $\NNN$. This representation is then easier to manipulate than a changing-dimension vector. 
\end{ex} 

On the measure space $\MMM$, we introduce a particular metric $\rho$ for the vague topology in order to have a Polish space (i.e. a separable completely metrizable topological space). 
This property will be used in Section \ref{sec:loc}, for the explicit construction of the stochastic process. 
As shown in \cite[Appendix]{Ka86}, a suitable choice for $\rho$ is constructed as follows. 
Let $\mathcal{C}$ be a countable basis of open bounded subsets in $\RR^d$ closed under finite unions. For all $C$ in $\mathcal{C}$, it exists a sequence $(C_n)_n$ in $\mathcal{B}_b$ and an increasing sequence $(f_{C,n})_n$ in $C_c(\RR^d)$ such that
\begin{equation*}
f_{C,n} \xrightarrow[n\rightarrow \infty]{} \indic_C \quad\text{ and }\quad\indic_{C_n}\leq f_{C,n}\leq\indic_{C}.
\end{equation*}
Since $\mathcal{C}$ is countable, the set $\{f_{C,n} ~|~ C\in\mathcal{C}$,  $n\in\NN\}$ is also countable, and we then number $f_1,f_2,\ldots$ those functions. Any measure $\mu$ is completely determined by the set $\{ \mu f_k, k\in\NN\}$. Now, for all $\mu$ and $\mu'$ in $\MMM$, we define the distance $\rho$ by
\begin{equation*}
 \rho(\mu,\mu'):= \sum_{k\geq1}\frac{1}{2^k}\left( 1-\exp(-|\mu f_k-\mu' f_k|)\right).
 \end{equation*}

From this metric on $\MMM$, we define a metric on $E$ related to its hybrid structure: any two points in $E$ with different modes must be arbitrarily far away from each other.
For all $x=(v,\zeta)$ and $x'=(v',\zeta')$ in $E$ set 
\begin{equation*}
 \rho_0(x,x'):=
 \begin{cases} { \displaystyle 
 \frac{2}{\pi} \arctan(\rho(\zeta,\zeta'))} &\text{ if } v= v' ,\\
1 &\text{ otherwise}.
\end{cases}
 \end{equation*}
Thus $\rho_0(x,x')$ equals 1 if and only if $x$ and $x'$ have different modes.
With this metric, a sequence $(x_n)_n=(v_n,\zeta_n)_n$ converges to $x=(v,\zeta)$ in $(E,\rho_0)$ if and only if it exists some $m$ in $\NN$ such that:
\begin{equation}
\label{eq:equiv-cv}
\begin{cases}v_n
=v \text{ for }n\geq m, \\ 
\zeta_{m+k} \xrightarrow[k\rightarrow \infty]{} \zeta  \text{ in } (E_m,\rho).
\end{cases}
\end{equation}
%
We thus denote 
\begin{equation*}
\overline{E}=\{(v,\zeta) \in K \times \MMM \ | \ v\in K, \zeta \in \overline{E}_v \},
\end{equation*}
the closure of $E$ for the distance $\rho_0$. The following statement is then straightforward.
%
\begin{lem}
The metric space  $(\overline{E},\rho_0)$ is a Polish space.
\end{lem}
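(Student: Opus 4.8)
The plan is to reduce the statement to two standard facts about metric spaces: a closed subspace of a Polish space is Polish, and a finite disjoint union of Polish spaces is Polish. The whole point of the definition of $\rho_0$ is precisely to realize $(\overline{E},\rho_0)$ as such a finite disjoint union: the mode slices $\{v\}\times\overline{E}_v$ are mutually at distance exactly $1$, hence clopen in $(\overline{E},\rho_0)$, while on a single slice $\rho_0$ coincides with $\tfrac{2}{\pi}\arctan\circ\,\rho$, a bounded metric topologically equivalent to $\rho$. I would take as granted, following \cite[Appendix]{Ka86}, that $(\MMM,\rho)$ is itself Polish, and then establish separability and completeness in turn.

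For separability, since $K$ is finite it suffices to produce a countable dense subset in each slice. As $\overline{E}_v\subset\MMM$ and separability is hereditary for metric spaces, every subspace of $(\MMM,\rho)$ is $\rho$-separable; pick a countable $\rho$-dense set $D_v\subset\overline{E}_v$. Because $\tfrac{2}{\pi}\arctan$ is continuous, strictly increasing and fixes $0$, it induces a homeomorphism of $(\overline{E}_v,\rho)$ onto the slice $(\{v\}\times\overline{E}_v,\rho_0)$, so $D_v$ remains $\rho_0$-dense there. The finite union $\bigcup_{v\in K}\{v\}\times D_v$ is then a countable $\rho_0$-dense subset of $\overline{E}$.

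For completeness, I would start from a $\rho_0$-Cauchy sequence $x_n=(v_n,\zeta_n)$ in $\overline{E}$. Choosing $\epsilon<1$ in the Cauchy condition forces $v_n$ to be eventually constant, say $v_n=v$ for $n\geq N$, since two points in distinct modes are always at distance $1$. For $n,m\geq N$ one has $\rho_0(x_n,x_m)=\tfrac{2}{\pi}\arctan(\rho(\zeta_n,\zeta_m))$, and the continuity of the inverse of $\tfrac{2}{\pi}\arctan$ at $0$ shows that $(\zeta_n)_{n\geq N}$ is $\rho$-Cauchy in $\MMM$. Now $\overline{E}_v$ is $\rho$-closed in $\MMM$ by construction (it is a closure), hence complete; thus $\zeta_n\to\zeta$ for some $\zeta\in\overline{E}_v$, and the convergence characterization \eqref{eq:equiv-cv} yields $x_n\to(v,\zeta)\in\overline{E}$.

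There is no serious obstacle here: the content is entirely packaged in the design of $\rho_0$ together with Kallenberg's theorem. The only points requiring a little care are verifying that $\rho_0$ is genuinely a metric — which uses the concavity of $\arctan$ to preserve the triangle inequality under composition with $\rho$, and its boundedness to make the mode contribution consistent — and confirming that the $\arctan$ rescaling leaves the Cauchy (uniform) structure on each slice unchanged. Both are elementary.
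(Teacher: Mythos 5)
Your proof is correct and follows exactly the route the authors intend: the paper states this lemma without proof (``The following statement is then straightforward''), relying on Kallenberg's result that $(\MMM,\rho)$ is Polish together with the decomposition of $(\overline{E},\rho_0)$ into finitely many clopen mode slices, each a closed (hence Polish) subspace of $\MMM$ up to the $\arctan$ rescaling. Your write-up simply supplies the routine details the authors omitted, including the correct observations that the rescaling preserves Cauchy sequences and that the mode separation forces eventual constancy of the mode along any Cauchy sequence.
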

\subsection{Construction of measure-valued PDMPs}
\label{sec:loc}
We now introduce the three local characteristics of the PDMP specifying the dynamics of trajectories between jumps, the jump times and the post-jump locations.
\begin{itemize}
\item The flow $\flotmesure$ is defined by $(x,t) \mapsto \flotmesure(x,t)=(v,\flotmesure_v(\zeta,t))$ for all $x=(v,\zeta)$ in $E$ and non-negative $t$, where the functions $\Phi_v:\mathfrak{M}\times \RR\rightarrow \mathfrak{M}$ are continuous 
and 
have a semi-group property: for all $s,t \geq0$, we have $\flotmesure_v(\cdot,t+s)=\flotmesure_v(\flotmesure_v(\cdot,s),t)$. 
The flow describes the deterministic trajectory of the process between jumps. Let
\begin{equation*}
t^*(x)=\inf\{t>0,~\flotmesure_v(t,\zeta)\in \partial E_v\}
\end{equation*}
be the deterministic time the flow takes to reach the boundary of the domain starting from $x=(v,\zeta)\in E$, with the usual convention $\inf\emptyset=+\infty$. An infinite exit time $t^*$ means that the process cannot reach the boundary in finite time.
 
 \item The jump intensity $\tauxdesautmesure:E \rightarrow \RR_+$ is a mesurable function, with a local integrability property: for all $x=(v,\zeta)\in E$ there exists some $\varepsilon>0$ such that
\begin{equation*}
\int_0^\epsilon \tauxdesautmesure(v,\flotmesure_v(\zeta,s)) ds <\infty.
\end{equation*}
It determines the frequency of the jumps.

\item The Markov kernel $\noyaumesure:  \overline{E} \times \mathcal{B}(E) \rightarrow [0,1]$ selects the post-jump locations. It has the following property: 
\begin{equation*}
 \forall x\in E,~\noyau(x,\{x\})=0,
 \end{equation*}
meaning that he process cannot have a no-move jump.
\end{itemize}

From these local characteristics, one can construct a stochastic process similarly to \cite[Section 24]{davis93} as follows.
Let $(\Omega,\mathcal{F},\proba)$ be the canonical space for a sequence $(U_n)_{n\geq 1}$ of independent random variables with uniform distribution on $[0,1]$. The sample path of an $E$-valued PDMP $(X_t(\omega))_{t\geq 0}$ starting from a fixed initial point $x=(v,\zeta)\in E$ and for some $\omega\in \Omega$ is defined iteratively. Let 
\begin{equation} \label{eq:loiT1}
F(x,t)=\indic_{\{t<t^*(x)\}} \exp\left(-\int_0^t \tauxdesautmesure(v,\flotmesure_v(\zeta,s))ds\right)
\end{equation}
for $(x,t)=(v,\zeta,t)\in E\times\mathbb{R}_+$ and $ \Psi_1$ be the function from $E\times [0,1]$ onto $\mathbb{R}_+$ defined by
\begin{equation*} 
 \Psi_1(x,u)=\inf\{t\geq 0;\  F(x,t)\leq u\},
\end{equation*}
and define $S_1(\omega)=T_1(\omega)= \Psi_1(x,U_1(\omega))$ the first jump time of the process. Thus $F(x,\cdot)$ is the survivor function of $T_1$.\\

As the spaces $\MMM$ and $\overline{E}$ are Polish, one can use \cite[p. 6]{Ko11} to obtain that $\{\noyaumesure(x,\cdot) \}_{x\in \overline{E}}$ is a collection of probability measures on $\overline{E}$, with a measurable dependence on the parameter $x$ in $\overline{E}$. Then there is a measurable function $\Psi_2:E\times [0,1]\rightarrow E$ such that the distribution of $\Psi_2(y,U)$ is $\noyaumesure(y,\cdot)$ for any  random variable $U$ with uniform distribution on $[0,1]$. Hence, one can then define 
\begin{align*} 
X_t(\omega)&=\flotmesure_v(\zeta,t),\quad \text{ for } 0\leq t<T_1, \\
X_{T_1}(\omega)&=\Psi_2\big((v,\flotmesure_v(\zeta,T_1(\omega))),U_2(\omega)\big).
\end{align*}
Hence the trajectory $X_t$ follows the deterministic flow starting from $X_0=x$ until the first jump time $T_1$. At $T_1$ a new location $Z_1=X_{T_1}$ is drawn according to the Markov kernel $\noyaumesure$. Namely, the law $\mathcal{L}(Z_1 \ | \ T_1)$ of  $Z_1$ conditionally on $T_1$ is 
\begin{equation} \label{eq:loiZ1}
\mathcal{L}(Z_1 \ | \ T_1)=   \noyaumesure(\flotmesure(x,S_1),\cdot),
\end{equation}

The process now restarts from $X_{T_1}$ following the same steps. Define
\begin{equation*} 
S_2(\omega)= \Psi_1(X_{T_1}(\omega),U_3(\omega)),\quad T_2(\omega)=S_1(\omega)+S_2(\omega),
\end{equation*}
and set 
\begin{align*} 
X_t(\omega)&=\flotmesure(X_{T_1}(\omega),t-T_1(\omega)),\quad \text{ for } T_1(\omega)\leq t<T_2(\omega),\\
X_{T_2}(\omega)&=\Psi_2\big((v,\flotmesure(X_{T_1}(\omega),S_2(\omega))),U_4(\omega)\big),
\end{align*}
and so on. In order to avoid explosion issues, the following assumption will hold throughout the paper.
\begin{assu} \label{ass:T}
For all $(x,t)$ in $E\times \RR_+$, $\Esp_x[\sum_{n=1}^{\infty} \indic_{(T_n<t})]< \infty$.
\end{assu}
Hence the trajectories of $(X_t)$ are well defined for all $t\geq 0$.\\

The positive random variables $S_1,S_2,\ldots$ are the times between two consecutive jumps or inter-jump times. For notational convenience, we set $T_0=S_0=0$ and $T_n:=\sum_{i=1}^n S_n$ the $n^{th}$ jump time of the process. Note that we have $T_1=S_1$ and $S_{n+1}=T_{n+1}-T_n$. The sequence $(Z_n)_n$ with $Z_n=X_{T_n}$ describes the post jump locations of the process. By construction, all the randomness of the continuous-time process $(X_t)_{t\geq 0}$ is contained in the discrete-time process $(Z_n,S_n)_{n\geq 0}$.
%
\subsection{Structure of stopping times and Markov property}
\label{sec:markov}
%
The aim of this section is to prove that the special structure of stopping times for finite dimensional PDMPs given in \cite[Theorem A2.3]{davis93} still holds in the measure space context. This yields the Markov property and will be important for the study of the optimal stopping problem in Section \ref{sec:guger}.\\

Let $D_E[0,+\infty)$ be the set of unctions on $\mathbb{R}_+$ with values in $E$ that are right-continuous with left limits. Denote by $\tilde X_t$ the coordinate function $\tilde X_t(f)=f(t)$ for $f\in D_E[0,+\infty)$. Let $(\mathcal{F}_t^0)_{t\geq 0}$ denote the natural filtration of $(\tilde x_t)$ and $\mathcal F^0=\vee_{t\geq 0}\mathcal F^0_t$. Under Assumption \ref{ass:T}, for each starting point $x\in E$, the construction in the previous section defines a measurable mapping $\Psi_x$ from $\Omega$ onto $D_E[0,+\infty)$ such that $\tilde X_t(\Psi_x(\omega))=X_t(\omega)$. Let $\mathbb P_x$ denote the image measure  of $\mathbb P$ by $\Psi_x$. This defines a family of measure $(\mathbb P_x)_{x\in E}$ on $D_E[0,+\infty)$. In the sequel, we identify $\tilde X_t$ and $X_t$. 
For any probability measure $\nu$ on $E$ define the measure $\mathbb P_\nu$ on $(D_E[0,+\infty),\mathcal F^0)$ by $\mathbb P_\nu(\cdots)=\int_E\mathbb P_x(\cdot)\nu(dx)$. Now let $\mathcal{F}_t^\nu$ be the completion of $\mathcal{F}_t^0$ with all $\mathbb P_\nu$-null sets of $\mathcal{F}^0$ and define $\mathcal{F}_t=\cap_{\nu\in\mathcal{P}(E)}\mathcal{F}_t^\nu$, where $\mathcal{P}(E)$ is the set of probability measures on $E$.\\

We can now state a crucial result on the structure of stopping-times for our process.
%
\begin{thm}\label{th:stop time}
A non-negative random variable $\tau$ is a $(\mathcal{F}_t)_{t\geq 0}$ stopping-time if and only if there exists a sequence $(R_n)_{n\in \mathbb N}$ of non-negative $(\mathcal{F}_{T_n})_{n\in \mathbb N}$-adapted random variables such that
\begin{equation*}
\tau=\sum_{n=1}^\infty R_{n-1}\wedge S_n.
\end{equation*}
\end{thm}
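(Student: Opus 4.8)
The plan is to adapt the proof of the finite-dimensional structure theorem \cite[Theorem A2.3]{davis93}: because the motion between two jumps is deterministic, no information beyond $\mathcal F_{T_n}$ and the elapsed time is created on the event that exactly $n$ jumps have occurred. The whole argument rests on one \emph{structural lemma} that I would establish first: for every $t\geq0$ and $n\in\NN$, the trace of $\mathcal F_t$ on the stratum $\{T_n\leq t<T_{n+1}\}$ equals the trace of $\mathcal F_{T_n}$, i.e. for each $A\in\mathcal F_t$ there is $A_n\in\mathcal F_{T_n}$ with $A\cap\{T_n\leq t<T_{n+1}\}=A_n\cap\{T_n\leq t<T_{n+1}\}$. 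The heuristic is that on this event the path is recovered from $(Z_0,S_1,Z_1,\dots,S_n,Z_n)$ together with $t-T_n$, since $X_{T_n+s}=\flotmesure(Z_n,s)$ for $0\leq s<S_{n+1}$. I would prove it first for the raw filtration $\mathcal F^0_t$ by testing on generators $\{X_s\in\Gamma\}$, $s\leq t$, then transfer it to $\mathcal F_t$ through the completion and the intersection over $\nu\in\mathcal P(E)$. As preliminaries I would record that each $T_n$ is a stopping time, that $T_n+R$ is a stopping time whenever $R\geq0$ is $\mathcal F_{T_n}$-measurable, that $\mathcal F_{T_n}=\sigma(Z_0,S_1,Z_1,\dots,S_n,Z_n)$, and that $\{\tau<T_n\},\{\tau\geq T_n\}\in\mathcal F_{T_n}$ for any stopping time $\tau$.

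For necessity, let $\tau$ be a stopping time. Applying the structural lemma to the events $\{\tau\leq T_n+s\}$ yields, on $\{\tau\geq T_n\}$, an $\mathcal F_{T_n}$-measurable ``delay after the $n$-th jump'' $r_n\in[0,\infty]$ with $\tau\wedge T_{n+1}=(T_n+r_n)\wedge T_{n+1}$. I would then set $R_n:=r_n\,\indic_{\{\tau\geq T_n\}}$, which is $\mathcal F_{T_n}$-measurable because $\{\tau\geq T_n\}\in\mathcal F_{T_n}$, and check the series termwise: on the stratum $\{T_m\leq\tau<T_{m+1}\}$ one gets $R_{n-1}\wedge S_n=S_n$ for $n\leq m$, then $R_m\wedge S_{m+1}=\tau-T_m$, and $R_{n}\wedge S_{n+1}=0$ for $n>m$ (since there $\tau<T_n$), so the sum telescopes to $T_m+(\tau-T_m)=\tau$.

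For sufficiency, given the $R_n$ I would show that the series defines a stopping time by identifying it, on each stratum $\{T_n\leq t<T_{n+1}\}$, with the first delayed time $T_m+R_m$ that occurs strictly before the next jump (the residual event $\{\sum_n\indic_{\{T_n\leq t\}}=\infty\}$ being $\proba_\nu$-null by Assumption \ref{ass:T}). Since $T_m+R_m$ is a stopping time for $\mathcal F_{T_m}$-measurable $R_m\geq0$, and the event $\{T_m+R_m<T_{m+1}\}\cap\{T_m+R_m\leq t\}$ lies in $\mathcal F_t$, each such restricted time is a stopping time and $\tau$ is their countable infimum; the structural lemma is what guarantees that no information finer than $\mathcal F_{T_m}$ plus elapsed time enters these events, so that $\{\tau\leq t\}\in\mathcal F_t$.

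The main obstacle is the structural lemma; everything else is bookkeeping with stopping times. In the finite-dimensional case this is Davis' computation, and the only genuinely new points are measurability on the measure space: identifying $\mathcal F_{T_n}$ with $\sigma(Z_0,S_1,Z_1,\dots,S_n,Z_n)$, and proving that the reconstruction map $(Z_n,s)\mapsto\flotmesure(Z_n,s)$ is measurable, which uses the continuity of $\flotmesure$ and the Polish structure of $(\overline E,\rho_0)$ from the preceding Lemma (the same structure that, via \cite[p.~6]{Ko11}, already underlies the construction of $\Psi_2$). I also expect some care to be needed to commute the trace identity with the completion by $\proba_\nu$-null sets and the intersection over $\nu\in\mathcal P(E)$.
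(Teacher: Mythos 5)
Your overall strategy is sound, and it is essentially an unpacking of the argument that the paper imports by citation: the paper reduces to the auxiliary right-continuous piecewise-constant process $\eta_t=(X_{T_n},n)$ for $T_n\leq t<T_{n+1}$ and invokes Davis's Theorem A2.3, whose proof is precisely your ``structural lemma'' that the trace of $\mathcal F_t$ on $\{T_n\leq t<T_{n+1}\}$ coincides with that of $\mathcal F_{T_n}$. Your necessity direction (extracting $r_n$ with $\tau\wedge T_{n+1}=(T_n+r_n)\wedge T_{n+1}$ on $\{\tau\geq T_n\}$, setting $R_n=r_n\indic_{\{\tau\geq T_n\}}$ and telescoping over the strata) is correct, and your list of the genuinely new measurability points --- the reconstruction map $(Z_n,s)\mapsto\flotmesure(Z_n,s)$, the identification $\mathcal F_{T_n}=\sigma(Z_0,S_1,\dots,Z_n)$, and commuting the trace identity with the completion and the intersection over $\nu$ --- is exactly where the measure-valued setting requires attention. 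The one advantage of the paper's route that you forgo is that by passing to $(\eta_t)$, which is piecewise \emph{constant} with values in the Borel space $E\times\NN$, all flow-related measurability disappears and only the equality of the two generated filtrations has to be checked.

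The sufficiency direction as you propose it has a genuine gap. You claim to identify $\sum_{n\geq1}R_{n-1}\wedge S_n$ with ``the first delayed time $T_m+R_m$ that occurs strictly before the next jump.'' This identification is false for arbitrary non-negative $\mathcal F_{T_n}$-measurable $R_n$, and in fact the ``if'' implication fails without an extra consistency condition. Take $R_0=1$, $R_1=5$ and $R_n=0$ for $n\geq2$: then $\tau=1\wedge S_1+5\wedge S_2$, and on $\{S_1>3\}$ one has $\{\tau\leq 3\}=\{S_2\leq 2\}$; by your own structural lemma the trace of $\mathcal F_3$ on $\{T_1>3\}$ is the trace of $\mathcal F_0$, which cannot contain $\{S_2\leq 2\}\cap\{T_1>3\}$ for a generic PDMP (under $\proba_x$ this set is neither null nor co-null in $\{T_1>3\}$), so $\tau$ is not a stopping time. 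The sum defines a stopping time only when the $R_n$ satisfy $R_n=0$ on $\{R_{n-1}<S_n\}$ --- precisely the condition the paper builds into its candidates via $R^\epsilon_{n,k}=r_{n-k-1,\epsilon/2^k}(Z_k)\,\indic_{(R^\epsilon_{n,k-1}\geq S_k)}$, and the condition automatically satisfied by the $R_n$ you construct in the necessity direction. You should therefore either add this consistency hypothesis to the ``if'' half (after which your identification and the rest of your bookkeeping go through), or restrict the converse to sums of that form; as literally stated, no proof of the sufficiency half can succeed.
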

%
\begin{proof} 
The proof follows the same lines as in \cite[Section 1.7]{PDP15}. 
It is based on Theorem A2.3 in \cite{davis93} that is valid for any right-continuous piecewise constant process taking values in a Borel set and the one-to-one correspondence between our process $(X_t)$ and the right-continuous piecewise constant $E\times\mathbb N$-valued process $(\eta_t)$ defined by 
\begin{align*}
\eta_t=(X_0,0), \ t<T_1,\quad \eta_t=(X_{T_n},n),\ T_n\leq t< T_{n+1},
\end{align*}
see \cite[(25.1)]{davis93}.
\end{proof}
%
\begin{thm}
The process $(X_t)_{t\geq 0}$ on $(D_E[0,+\infty), \mathcal{F}, (\mathcal{F}_t), (\mathbb P_x)_{x\in E})$ is a strong Markov process.
\end{thm}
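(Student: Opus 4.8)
The plan is to establish the strong Markov property by leveraging the structure already in place: the process is entirely encoded in the discrete-time Markov chain $(Z_n, S_n)_{n \geq 0}$, and the characterization of stopping times in Theorem \ref{th:stop time} reduces an arbitrary stopping time to a sequence of $(\mathcal{F}_{T_n})$-adapted random variables $R_n$. First I would verify the (ordinary) Markov property: since the construction defines $(X_t)$ iteratively from the i.i.d.\ sequence $(U_n)$, with each cycle restarting afresh from the post-jump location $X_{T_n}$ using only the survivor function $F$ and the kernel $\noyaumesure$, the future after any deterministic time depends on the past only through the current state. Concretely, the pair $(Z_n, S_n)$ forms a homogeneous Markov chain on $\overline{E} \times \RR_+$ whose transition kernel is read off from \eqref{eq:loiT1} and \eqref{eq:loiZ1}, and the continuous-time process is a deterministic functional of this chain via the flow $\flotmesure$. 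This gives the Markov property $\Esp_x[g(X_{t+s}) \mid \mathcal{F}_t] = \Esp_{X_t}[g(X_s)]$ for bounded measurable $g$.

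Next I would upgrade to the strong Markov property. The key reduction is that it suffices to verify the identity for stopping times of the form $\tau = T_n \wedge (T_{n-1} + R_{n-1})$ and then pass to general stopping times via Theorem \ref{th:stop time}, which writes any $(\mathcal{F}_t)$-stopping time as $\tau = \sum_{n} R_{n-1} \wedge S_n$. On the event $\{T_n \leq \tau < T_{n+1}\}$, the post-$\tau$ trajectory is governed by the flow started at $X_\tau = \flotmesure(Z_n, \tau - T_n)$ until the next jump, whose conditional law, by the memoryless reconstruction of the inter-jump time from $F$, depends only on $X_\tau$; thereafter the process evolves by fresh independent cycles. I would decompose a generic bounded functional of the post-$\tau$ process according to which inter-jump interval $\tau$ falls in, condition on $\mathcal{F}_{T_n}$, and use the Markov property of the embedded chain together with the lack-of-memory structure of the survivor function $F$ to show the conditional expectation equals $\Esp_{X_\tau}[\,\cdot\,]$.

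The main technical point, and the chief obstacle, is the measurability and right-continuity bookkeeping needed to make the decomposition rigorous on the path space $D_E[0,+\infty)$: one must check that $X_\tau$ is $\mathcal{F}_\tau$-measurable, that the shifted process is jointly measurable, and that the completed filtration $(\mathcal{F}_t)$ is right-continuous so that the strong Markov property holds for the augmented filtration rather than merely $(\mathcal{F}_t^0)$. As in the finite-dimensional theory, right-continuity of $(\mathcal{F}_t)$ follows once the Markov property is established, by an application of the standard argument (e.g.\ via the monotone class theorem and a zero-one type argument on the completed filtration). Because the state space $(\overline{E}, \rho_0)$ is Polish by Lemma \ref{...} and the kernel $\noyaumesure$ admits a measurable parametrization $\Psi_2$, all the measurable-selection and regular-conditional-probability tools invoked in Davis' construction carry over verbatim. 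I would therefore present the proof by invoking the one-to-one correspondence with the marked point process $(Z_n, T_n)$ and citing the finite-dimensional argument of \cite[Theorem 25.5]{davis93}, indicating that the only substantive verifications are those that use the topology of $E$, all of which are supplied by the Polish property and the measurability of the local characteristics.
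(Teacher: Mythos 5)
Your proposal is correct and follows essentially the same route as the paper, which simply invokes the one-to-one correspondence with the piecewise constant marked point process and transfers the argument of Davis (Section 25), using the stopping-time structure theorem; your sketch merely spells out the content of that citation (the embedded chain, the interval-by-interval decomposition at a stopping time, and the measurability checks enabled by the Polish structure). No discrepancy to report.
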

%
\begin{proof} 
The strong Markov property is proved in the same way that in \cite[Section 25]{davis93}, using Theorem \ref{th:stop time} and the one-to-one correspondence with the piecewise constant process $(\eta_t)$ defined above.
\end{proof}
 %
\subsection{Toy example}
\label{ex}
%
We now develop Example~\ref{ex:notation} into a more generic model for some cells population. Typically, cells grow and divide into two daughter-cells that start growing and then divide and so on. 
%
\subsubsection{Single cell model}
\label{sect:ex1}
%
We first define a single-cell model that follows standard final dimensional PDMP dynamics, where one randomly selects a single daughter cell at each division. This model was further studied in \cite{DHKR15}.
%
We consider a model with a single mode, hence the state space is simply $\mathbb{R}_+$.
For $\xi$ in $\RR_+$ and $t>0$, the size at time $t$ for a cell with initial size $\xi$ is given by 
$\flot(\xi,t)=\xi\exp(\tauxdecroissance t)$, where $r>0$ is the common growth rate for all the cells. 
The jump intensity giving the division dates is given by $\tauxdesaut(\xi)=\xi^{\alpha}$, for some positive number $\alpha$. 
This is simplified model consistent with the statistical evidence that division is triggered by the cell size rather than its age, see \cite{krell}.
The jump kernel is simply a division by two $Q(x,A)=\delta_{x/2}(A)$ for any Borel subset $A$ of $\mathbb{R}_+$.
%
\subsubsection{Population model}
\label{sect:extous}
We now consider the previous cell growth-division dynamics but for the whole population instead of a single randomly selected cell.
Again we consider a single mode so that he space $E$ is simply $\NNN$.

Let $\zeta=\sum_{i=1}^n \delta_{x_i}$  be in $\NNN$ be an initial state of $n$ cells with respective sizes $(x_i)_{1\leq i\leq n}$.
Each cell grows following the previous dynamics, so that globally the flow is 
\[ \fonction{{\flotmesure}}{\mathfrak{N}\times\RR_+}{\mathfrak{N}}{(\zeta,t)}{\sum_{i=1}^n \delta_{\flot(x_i,t)}.}\]
The first jump time corresponds to the first split ie the minimum between $n$ exponentially distributed random variables. 
Thus, the jump intensity is:
\[ \begin{array}{ccccc}
{\tauxdesautmesure} & : & \mathfrak{N} & \to & \RR \\
 & & \zeta& \mapsto & \sum_{i=1}^n l(x_i).\\
\end{array}  \]
Given that the $j^{th}$ cell is the one which split, the post-jump location is 
\[Z_1=\sum_{\substack{ i=1\\i\neq j}}^n \delta_{x_i}+2 \delta_{x_j/2}, \]
where celle number $j$ was removed and two new cells are added with half the size of cell $j$. For notational convenience, we set
\[\zeta_{(j)}:=\zeta-\delta_{x_j}+2 \delta_{x_j/2}. \]
Thus, for $A$ in $\mathcal{E}$, the Markov jump kernel of the PDMP is given by 
\[ {\noyaumesure}(\zeta,A)=\sum_{j=1}^n \frac{l(x_j)}{\sum_{i=1}^n l(x_i)} \indic_A(\zeta_{(j)}). \]
%
\subsubsection{Time-augmented population model}  \label{sec:aug}
Sometimes, it is convenient to add time in the state variable of a PDMP, for instance when one wants to use time-dependent jump intensity, or trigger some jump when a certain lapse of time has passed, or to study control problems with time-dependent reward functions. In \cite[Section 31]{davis93} Davis proves that for finite dimensional PDMPs, the time-augmented process is still a PDMP. The same property holds in the framework of our toy example. \\ 
We consider the time-augmented process $\traitaug_t:$ defined as the PDMP on $\NNN$ starting from the initial state $\tilde\zeta=\sum_{i=1}^n \delta_{(x_i,0)}$. The flow is now given by
\[ \fonction{{\flotaug}}{\mathfrak{N}\times\RR_+}{\mathfrak{N}}{(\tilde\zeta,t)}{\sum_{i=1}^n \delta_{(\flot(x_i,t), u+t)},}\]
for any $\tilde\zeta$ in $\NNN$  of the form $\tilde\zeta=\sum_{i=1}^n \delta_{(x_i,u)}$. The 
jump intensity is
\[ \begin{array}{ccccc}
{\tauxdesautaug} & : & \mathfrak{N} & \to & \RR \\
 & & \tilde\zeta=\sum_{i=1}^n \delta_{(x_i,u)}& \mapsto & \sum_{i=1}^n l(x_i),\\
\end{array}  \]
and the jump kernel is simply
\[ \noyauaug(\sum_{i=1}^n \delta_{(x_i,u)},A)=\sum_{j=1}^n \frac{l(x_j)}{\sum_{i=1}^n l(x_i)} \indic_A(\sum_{i=1}^n \delta_{(x_i,u)}-\delta_{(x_j,u)}+2\delta_{(x_j/2,u)}).\]
 %
 We will further study this toy example in Section \ref{sec:compa} to prove that controlling the whole population is not equivalent to controlling a suitably chosen random lineage.
 %
\section{Optimal stopping problem} \label{sec:guger}
%
We now turn to the main aim of this paper: defining and solving the optimal stopping problem for measure-valued PDMPs.
Roughly speaking, one wants to stop the process at the best time in order to maximize some reward depending on the state of the process when stopped.
More precisely, let $(X_t)_{t\geq 0}$ be an $E$-valued PDMP on $(D_E[0,+\infty),\mathcal{F},(\mathcal{F}_t)_{t\geq 0},(\proba_x)_{x\in E})$ and  $g\in B(E)$ be some non-negative reward function.
Denote by $\mathcal{M}$ the set of stopping-times with respect to the filtration $(\mathcal{F}_t)_{t\geq 0}$ and
for all positive integer $N$, let  $\mathcal{M}_N$ be the set of stopping-times bounded by the $N^{th}$ time jump $T_N$ of the PDMP
\[ \mathcal{M}_N=\{\tau\in \mathcal{M};\ \tau \leq T_N\}. \]
For all $x\in E$, set 
 \begin{equation}  \label{eq:reward} 
 \mathbb{V}(x):=\sup_{\tau\in\mathcal{M}_N}\mathbb{E}[g(X_\tau)\ |\ X_0=x].   
 \end{equation}
 Thus $ \mathbb{V}(x)$ is the best possible (average) performance when stopping a PDMP starting from $X_0=x$ before its $N$-th jump. Function $\mathbb{V}$ is called the \emph{value function} of the optimal stopping problem. Solving an optimal stopping problem consists in characterizing the value function as the solution of some recursive equations called \emph{dynamic programming} equations and exhibiting a family of \emph{$\epsilon$-optimal stopping times} $\tau_\epsilon\in\mathcal{M}_N$ such that
 \begin{equation*} 
 \mathbb{V}(x)-\epsilon\leq \mathbb{E}[g(X_{\tau_\epsilon})\ |\ X_0=x]\leq \mathbb{V}(x).
 \end{equation*}
%
In this section, we first define some suitable dynamic programming operators and a family of stopping times. Then we prove that the value functions can be constructed by iteration of the dynamic programming operators and that the stopping times are $\epsilon$-optimal. This section is inspired from the study of the optimal stopping problem for finite (fixed) dimension PDMPs derived in \cite{gugerli}.
 %
\subsection{Dynamic programming operators} 
%
We start with some additional notation and assumption. 
For $w$ in $B(E)$, $x$ in $E$ and $l$ a measurable real-valued function on $E$, we denote in short
 \[l\noyaumesure w (x) := l(x)\times \noyaumesure w(x)=l(x)\int_E w(y) \noyaumesure (x,dy). \]
The following assumption is made for simplicity reasons. It is satisfied in most real-life examples for instance when monitoring a population until some finite horizon time. 
\begin{assu} The exit time $t^*$ is in  $B(E)$.
\end{assu}

We now define some operators on $B(E)$.
Let $\HHmesure$ and $\IImesure$ be the operators from  $B(E)$ onto  $B(E\times \RR_ +)$ defined for all $w$ in $B(E)$, $x$ in $E$ and $t\in\mathbb{R}_+$, by
 \begin{align*}
\HHmesure w(x,t)&= w\left(\flotmesure(x,t\wedge t^*(x))\right)~\Exp^{-\Tauxdesautmesure(x,t\wedge t^*(x))},\\  
{\IImesure}{w}(x,t)&=\int_0^{t\wedge t^*(x)}\tauxdesautmesure\noyaumesure w(\flotmesure(x,s))~\Exp^{-\Tauxdesautmesure(x,s)}\mathrm{d}s,
 \end{align*}
 where 
 \begin{align*}
\Tauxdesautmesure(x,t)=\int_0^t \lambda (\flotmesure(x,s))ds.
 \end{align*}
We also introduce operator $\KKmesure$ from $B(E)$ onto $B(E)$, defined for all $w$ in $B(E)$ and $x$ in $E$ as
\[ {\KKmesure}{w}(x)= \int_0^{t^*(x)}\tauxdesautmesure\noyaumesure w(\flotmesure(x,s))~\Exp^{-\Tauxdesautmesure(x,s)}\mathrm{d}s + \noyaumesure w (\flotmesure(x,t^*(x))) \Exp^{-\Tauxdesautmesure(x,t^*(x))}. \]
It is straightforward to see that these operators can be expressed as expectations involving the embedded Markov chain $(Z_n,S_n)$ defined in Section \ref{sec:loc}.
%
\begin{prop}
For all $w$ in $B(E)$, $x$ in $E$ and $t\geq 0$ one has:
\ali{\HH w(x,t) &= \Esp_x\left[w(X_{t\wedge t^*(x)})~\indic_{S_1> t\wedge t^*(x)}\right]=w\left(\flotmesure(x,t\wedge t^*(x))\right) \proba_{x}\left(S_1>t\wedge t^*(x)\right),\\
\II w(x,t) &= \Esp_x\left[ w(Z_1)~\indic_{S_1 \leq t\wedge t^*(x)} \right],\\
\KK w(x)&=\Esp_x[w(Z_1)].
}
\end{prop}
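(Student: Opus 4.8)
The three identities all follow by disintegrating the expectation $\Esp_x[\cdot]$ over the joint law of the first jump time $S_1=T_1$ and the first post-jump location $Z_1$, using three ingredients already recorded in the construction of Section~\ref{sec:loc}: (i) before the first jump the trajectory is deterministic, $X_s=\flotmesure(x,s)$ for $0\le s<S_1$; (ii) the explicit survivor function of $S_1$ given in \eqref{eq:loiT1}, which in the present notation reads $\proba_x(S_1>t)=F(x,t)=\indic_{\{t<t^*(x)\}}\,e^{-\Lambda(x,t)}$; and (iii) the conditional law \eqref{eq:loiZ1}, namely $\mathcal{L}(Z_1\mid S_1)=\noyaumesure(\flotmesure(x,S_1),\cdot)$, so that $\Esp_x[w(Z_1)\mid S_1=s]=\noyaumesure w(\flotmesure(x,s))$. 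Everything then reduces to elementary manipulations of these formulas.

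For the first identity, I would observe that on the event $\{S_1>t\wedge t^*(x)\}$ no jump has occurred up to time $t\wedge t^*(x)$, so by (i) the value $X_{t\wedge t^*(x)}=\flotmesure(x,t\wedge t^*(x))$ is deterministic and can be pulled out of the expectation; this gives the second equality. The probability factor is then read off from (ii): for $s<t^*(x)$ one has $\proba_x(S_1>s)=e^{-\Lambda(x,s)}$, which matches $e^{-\Lambda(x,\,t\wedge t^*(x))}$ and hence equals $\HHmesure w(x,t)$.

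For the second identity, I would condition on $S_1$ on the event $\{S_1\le t\wedge t^*(x)\}$. Differentiating (ii) shows that the (sub-)density of $S_1$ on $(0,t^*(x))$ is $-\partial_s F(x,s)=\tauxdesautmesure(\flotmesure(x,s))\,e^{-\Lambda(x,s)}$, and combining this with (iii) gives
\[
\Esp_x\!\left[w(Z_1)\,\indic_{S_1\le t\wedge t^*(x)}\right]=\int_0^{t\wedge t^*(x)}\noyaumesure w(\flotmesure(x,s))\,\tauxdesautmesure(\flotmesure(x,s))\,e^{-\Lambda(x,s)}\,\mathrm{d}s,
\]
which is exactly $\IImesure w(x,t)$ with the shorthand $\tauxdesautmesure\noyaumesure w$. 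The third identity then follows by splitting $\Esp_x[w(Z_1)]$ according to whether the jump occurs strictly inside the domain or is forced at the boundary: $\Esp_x[w(Z_1)\indic_{S_1<t^*(x)}]$ is the $t\uparrow t^*(x)$ limit of the previous display, yielding the integral term of $\KKmesure$, while the forced boundary jump contributes $\Esp_x[w(Z_1)\indic_{S_1=t^*(x)}]$.

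The one genuinely delicate point — and the main thing to get right — is the behaviour of $S_1$ at the exit time $t^*(x)$. The indicator in \eqref{eq:loiT1} makes $F(x,\cdot)$ discontinuous at $t^*(x)$, so $S_1$ carries an atom there of mass $\proba_x(S_1=t^*(x))=\lim_{s\uparrow t^*(x)}F(x,s)=e^{-\Lambda(x,t^*(x))}$, corresponding to the forced jump when the flow reaches $\partial E_v$. Conditionally on this event $Z_1\sim\noyaumesure(\flotmesure(x,t^*(x)),\cdot)$, so the boundary term equals $\noyaumesure w(\flotmesure(x,t^*(x)))\,e^{-\Lambda(x,t^*(x))}$, which is precisely the second summand of $\KKmesure$ (when $t^*(x)=+\infty$ this atom is absent and $\KKmesure w(x)=\lim_{t\to\infty}\IImesure w(x,t)$). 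This same atom is what must be tracked consistently in the first two identities: for $t<t^*(x)$ the interval $(0,t]$ contains no atom and the computations above are unambiguous, whereas at $t^*(x)$ itself the mass must be allotted coherently between the complementary events $\{S_1>t\wedge t^*(x)\}$ and $\{S_1\le t\wedge t^*(x)\}$, with the flow (pre-jump) value used in the $\HHmesure$-term.
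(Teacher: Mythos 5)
Your proof is correct and is precisely the computation the paper leaves implicit: the proposition is stated with no proof at all, introduced only by ``It is straightforward to see that these operators can be expressed as expectations involving the embedded Markov chain $(Z_n,S_n)$,'' and your disintegration over the law of $(S_1,Z_1)$ using \eqref{eq:loiT1} and \eqref{eq:loiZ1} is exactly that computation. Your closing discussion of the atom of $S_1$ at $t^*(x)$ --- which is what separates the boundary term of $\KKmesure$ from the integral term of $\IImesure$, and which forces a coherent reading of the strict/non-strict inequalities when $t\geq t^*(x)$ --- is the one genuinely delicate point, and you handle it correctly, going beyond what the paper records.
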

Finally we denote by $\JJ$ and $\LL$ the operators from $B(E)$ onto $B(E\times\mathbb R_+)$ and $B(E)$ respectively defined for all $w$ in $B(E)$, $x$ in $E$ and $t\in\mathbb{R}_+$ by
 \begin{align*}
\JJ(w,g)(x,t)&= \HH g(x,t)+\II w(x,t),\\
\LL(w,g)(x)&= \sup_{t\geq 0} \left\{ \JJ(w,g)(x,t) \right \} \vee \KK w (x),
 \end{align*}
%
where $g$ is the reward function of the optimal stopping problem.
Roughly speaking, operator $\LL$ represents the best compromise between stopping at the best location along the deterministic trajectory following the flow ($ \sup_{t\geq 0}  \JJ$ part) or waiting until the next jump ($\KK$ part).
%
\subsection{Family of stopping-times} 
%
Now we introduce a family of random variables and prove they are stopping times. They will be candidates $\epsilon$-optimal stopping times for our optimal stopping problem.
For all $x\in E$, $n\in\mathbb{N}$ and $\epsilon>0$ set
$${\displaystyle 
r_{n,\epsilon}(x)=\begin{cases} t^*(x)\text{ if }\KKmesure\Vmesure{n}(x)> \sup_{t>0}\JJmesure(\Vmesure{n},g)(x,t), \\ 
\inf\{s\geq0~;~\JJmesure(\Vmesure{n},g)(x,s)\geq\sup_{t>0}\JJmesure(\Vmesure{n},g)(x,t)-\epsilon \} &\text{otherwise.} \end{cases}
}$$

For all $\epsilon>0$ and $n\geq 2$ also set 
\begin{align*}
R_{1,\epsilon}&=r_{0,\epsilon}(Z_0),\\
R_{n,0}^\epsilon&=r_{n-1,\epsilon/2}(Z_0),\\
R_{n,k}^\epsilon&=r_{n-k-1,\epsilon/2^{k}}(Z_k)\indic_{(R_{n,k-1}^\epsilon\geq S_k)}, \quad 1\leq k\leq n-1.
\end{align*}
Then $R_{n,k}^\epsilon$ is clearly $\mathcal F_{T_k}$-measurable for $0\leq k\leq n-1$.
Finaly, define
$S_{1,\epsilon}:=r_{0,\epsilon}(Z_0)\wedge T_1=R_{1,0}^\epsilon\wedge S_1$, and by iteration,
$${\displaystyle 
S_{n,\epsilon}:=
\begin{cases} 
R_{n,0}^{\epsilon} &T_1>R_{n,0}^{\epsilon} ,\\ 
T_1+\theta(T_1)S_{n-1,\epsilon/2} & T_1\leq R_{n,0}^{\epsilon},\end{cases}
}$$
where $\theta(t)$ is the shift operator with lag $t$ on $D_E[0,+\infty)$, namely for $f\in D_E[0,+\infty)$, $\theta(t)f(\cdot)=f(t+\cdot)$. 

In order to prove that the $S_{n,\epsilon}$ are stopping-times in $\mathcal M_n$, we first study the effect of the shift operator on $R_{n,k}^\epsilon$.
\begin{lem} \label{lem:stopp time} 
For all $\epsilon>0$ and $n\geq 2$ and $1\leq k\leq n-1$, on the set $(T_1\leq S_{n,2\epsilon})$, one has
\begin{align*}
R_{n,k}^{2\epsilon}=\theta(T_1)(R_{n-1,k-1}^\epsilon).
\end{align*}
\end{lem}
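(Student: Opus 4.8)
The plan is to argue by induction on $k$, after first pinning down the action of the shift $\theta(T_1)$ on the embedded chain and identifying which branch of the definition of $S_{n,\cdot}$ is active on the set $(T_1\leq S_{n,2\epsilon})$. The starting observation is the elementary action of the shift on the jump chain: since $T_1=S_1$ is the first jump time, shifting a path by $T_1$ turns the post-jump data $(Z_j,S_j)_{j\geq 0}$ into $(Z_{j+1},S_{j+1})_{j\geq 0}$, i.e. $\theta(T_1)Z_j=Z_{j+1}$ and $\theta(T_1)S_j=S_{j+1}$. Because each $R_{m,j}^\epsilon$ is, by its recursive definition, a fixed measurable functional of $(Z_0,\dots,Z_j)$ and $(S_1,\dots,S_j)$ built from the deterministic maps $r_{\cdot,\cdot}$ and the indicators of the events $\{R_{\cdot,\cdot}\geq S_\cdot\}$, precomposing such a functional with $\theta(T_1)$ simply raises every index by one. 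For $k\geq 2$ this yields the commutation rule
\[
\theta(T_1)\big(R_{n-1,k-1}^\epsilon\big)=r_{n-k-1,\epsilon/2^{k-1}}(Z_k)\,\indic_{(\theta(T_1)(R_{n-1,k-2}^\epsilon)\geq S_k)},
\]
which will be the engine of the computation.

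Next I would determine the active branch. On the event $(T_1\leq S_{n,2\epsilon})$ the first branch in the definition of $S_{n,2\epsilon}$ is impossible, since there $S_{n,2\epsilon}=R_{n,0}^{2\epsilon}<T_1$ would contradict $T_1\leq S_{n,2\epsilon}$. Hence we must be in the second branch, so that $T_1\leq R_{n,0}^{2\epsilon}$ and $S_{n,2\epsilon}=T_1+\theta(T_1)S_{n-1,\epsilon}$ on this set. In particular, since $S_1=T_1$, the event $\{R_{n,0}^{2\epsilon}\geq S_1\}$ contains $(T_1\leq S_{n,2\epsilon})$, so the outermost indicator appearing in $R_{n,1}^{2\epsilon}$ equals $1$ there.

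I would then run the induction on $k$ from $1$ to $n-1$, all identities understood on the set $(T_1\leq S_{n,2\epsilon})$. For the base case $k=1$, the outer indicator is $1$ by the previous step, so $R_{n,1}^{2\epsilon}$ collapses to its $r$-factor evaluated at $Z_1$; this is exactly $\theta(T_1)(R_{n-1,0}^\epsilon)$, obtained by shifting the index-$0$ term $R_{n-1,0}^\epsilon$, whose defining $r$-factor is evaluated at $Z_0$ (the matching of the two $\epsilon$-parameters is precisely what the factor of $2$ in the statement and the halving built into the definitions are designed to produce). For the inductive step I would compare the two factors of $R_{n,k}^{2\epsilon}=r_{n-k-1,\epsilon/2^{k-1}}(Z_k)\,\indic_{(R_{n,k-1}^{2\epsilon}\geq S_k)}$ with the shifted expression above: the $r$-factors coincide because both level and $\epsilon$ indices agree after the level-$n$/$2\epsilon$ versus level-$(n-1)$/$\epsilon$ rescaling, while the indicators coincide because the induction hypothesis at rank $k-1$ gives $R_{n,k-1}^{2\epsilon}=\theta(T_1)(R_{n-1,k-2}^\epsilon)$ on the set, so that $\{R_{n,k-1}^{2\epsilon}\geq S_k\}$ and $\{\theta(T_1)(R_{n-1,k-2}^\epsilon)\geq S_k\}$ agree there.

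I expect the main obstacle to be the very first step: making rigorous that the shift operator commutes with the recursively defined functionals $R_{m,j}^\epsilon$ and shifts all indices by one. This requires viewing each $R_{m,j}^\epsilon$ genuinely as a functional on $D_E[0,+\infty)$ of the jump times and post-jump positions, and verifying that the identities $Z_j\circ\theta(T_1)=Z_{j+1}$ and $S_j\circ\theta(T_1)=S_{j+1}$ propagate correctly through the nested indicators. Once this bookkeeping is secured, the remaining two steps are routine, the role of the set $(T_1\leq S_{n,2\epsilon})$ being solely to select the active branch of $S_{n,\cdot}$ and thereby trivialize the outermost indicator.
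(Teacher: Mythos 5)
Your proposal is correct and follows essentially the same route as the paper's proof: identify the set $(T_1\leq S_{n,2\epsilon})$ with $(R_{n,0}^{2\epsilon}\geq S_1)$ to trivialize the outermost indicator, use $Z_j\circ\theta(T_1)=Z_{j+1}$ and $S_j\circ\theta(T_1)=S_{j+1}$ to shift all indices of the recursively defined functionals by one, and induct on $k$ (the paper only spells out $n=2$ and sketches the rest). The one caveat is that matching the $\epsilon$-parameters at $k=1$ requires reading $R_{n,0}^{\epsilon}$ as $r_{n-1,\epsilon}(Z_0)$ — consistent with the definition of $R_{1,\epsilon}$ and with the paper's own $n=2$ computation — rather than the literally stated $r_{n-1,\epsilon/2}(Z_0)$; this is an indexing typo in the paper, not a gap in your argument.
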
 
%
\begin{proof} 
For $n=2$, by definition, one has $R_{1,0}^\epsilon=r_{0,\epsilon}(Z_0)$ hence $\theta(T_1)(R_{1,0}^\epsilon)=r_{0,\epsilon}(Z_1)$ and $R_{2,1}^{2\epsilon}=r_{0,\epsilon}(Z_1)$ on $(T_1\leq S_{2,2\epsilon})=(T_1\leq R_{2,0}^\epsilon)$. Hence the result holds.\\
For $n\geq 3$ we prove the result by induction on $k$ using similar arguments and the fact that $(T_1\leq S_{n,\epsilon})=(R_{n,0}^{2\epsilon}\geq T_1)$.
\end{proof}

We now prove that $S_{n,\epsilon}$ is a stopping-time using the characterization of Theorem \ref{th:stop time}.

\begin{lem} \label{lem:stopp time} 
For all $\epsilon>0$ and $n\in\mathbb N^*$ one has 
\begin{align*}
S_{n,\epsilon}=\sum_{k=1}^n R_{n,k-1}^\epsilon\wedge S_k.
\end{align*}
In particular, $S_{n,\epsilon}$ is a stopping time and $S_{n,\epsilon}\leq T_n$.
\end{lem}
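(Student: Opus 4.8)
The plan is to establish the identity $S_{n,\epsilon}=\sum_{k=1}^n R_{n,k-1}^\epsilon\wedge S_k$ by induction on $n$ (at every level $\epsilon$ simultaneously), and then to read off both the stopping-time property and the bound $S_{n,\epsilon}\le T_n$ directly from Theorem \ref{th:stop time}. For the base case $n=1$ the claimed formula is exactly the defining relation $S_{1,\epsilon}=R_{1,0}^\epsilon\wedge S_1$, so there is nothing to prove. For the inductive step I would assume the formula at rank $n-1$ for every $\epsilon$ and split according to the two branches defining $S_{n,\epsilon}$. Here the key preliminary observation is that, since $T_1=S_1$, the dichotomy $(T_1>R_{n,0}^\epsilon)$ versus $(T_1\le R_{n,0}^\epsilon)$ coincides with $(S_1>R_{n,0}^\epsilon)$ versus $(S_1\le R_{n,0}^\epsilon)$, and that moreover $(T_1\le R_{n,0}^\epsilon)=(T_1\le S_{n,\epsilon})$: on the first branch $S_{n,\epsilon}=R_{n,0}^\epsilon<T_1$, whereas on the second $S_{n,\epsilon}=T_1+\theta(T_1)S_{n-1,\epsilon/2}\ge T_1$.

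On the branch $(S_1>R_{n,0}^\epsilon)$ one has $R_{n,0}^\epsilon\wedge S_1=R_{n,0}^\epsilon=S_{n,\epsilon}$, so it suffices to show that every remaining term of the sum vanishes. Indeed $R_{n,1}^\epsilon=r_{n-2,\epsilon/2}(Z_1)\indic_{(R_{n,0}^\epsilon\ge S_1)}=0$ on this set, and since the inter-jump times are strictly positive, the indicator cascade in the definition of $R_{n,k}^\epsilon$ propagates this: if $R_{n,k-1}^\epsilon=0$ then $\indic_{(R_{n,k-1}^\epsilon\ge S_k)}=\indic_{(0\ge S_k)}=0$, hence $R_{n,k}^\epsilon=0$ for all $k\ge1$ by a trivial sub-induction. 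Consequently $R_{n,k-1}^\epsilon\wedge S_k=0$ for $k\ge2$ and the sum collapses to $R_{n,0}^\epsilon$, as required.

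On the branch $(T_1\le R_{n,0}^\epsilon)=(T_1\le S_{n,\epsilon})$ I would apply the induction hypothesis to $S_{n-1,\epsilon/2}$, then push the shift $\theta(T_1)$ through the finite sum and the minima. This uses two facts. First, the inter-jump times of the shifted path are shifted by one index, i.e. $\theta(T_1)S_k=S_{k+1}$. Second, the preceding lemma on the shift effect, invoked with its parameter chosen equal to $\epsilon/2$ so that $2(\epsilon/2)=\epsilon$, yields $\theta(T_1)(R_{n-1,k-1}^{\epsilon/2})=R_{n,k}^\epsilon$ for $1\le k\le n-1$ precisely on the event $(T_1\le S_{n,\epsilon})$ we are working on. Combining these gives $\theta(T_1)S_{n-1,\epsilon/2}=\sum_{k=1}^{n-1}R_{n,k}^\epsilon\wedge S_{k+1}$, and the reindexing $j=k+1$ turns this into $\sum_{j=2}^{n}R_{n,j-1}^\epsilon\wedge S_j$. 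Adding $T_1=S_1=R_{n,0}^\epsilon\wedge S_1$ (the last equality because $S_1\le R_{n,0}^\epsilon$ here) then recovers $S_{n,\epsilon}=\sum_{k=1}^{n}R_{n,k-1}^\epsilon\wedge S_k$, completing the induction.

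For the final assertions I would note that each $R_{n,k-1}^\epsilon$ is $\mathcal F_{T_{k-1}}$-measurable and non-negative (as already observed), so extending the sequence by setting the coefficients equal to $0$ for indices beyond $n$, which contributes only terms $0\wedge S_k=0$, expresses $S_{n,\epsilon}$ in exactly the form required by Theorem \ref{th:stop time}; hence $S_{n,\epsilon}$ is an $(\mathcal F_t)$-stopping time. Bounding each minimum by $S_k$ gives $S_{n,\epsilon}\le\sum_{k=1}^{n}S_k=T_n$, so $S_{n,\epsilon}\in\mathcal M_n$. The main obstacle is the bookkeeping in the second branch: one must match the $\epsilon$-indices correctly when applying the shift lemma, verify that $\theta(T_1)$ genuinely commutes with the sum-of-minima before reindexing, and justify the identification $(T_1\le S_{n,\epsilon})=(T_1\le R_{n,0}^\epsilon)$ that places us on the exact event where the shift lemma is valid.
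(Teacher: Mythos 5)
Your proposal is correct and follows essentially the same route as the paper: induction on $n$, the same two-branch split according to whether $T_1$ exceeds $R_{n,0}^\epsilon$, collapse of the tail of the sum via the indicator cascade on the first branch, and the shift lemma (with parameter $\epsilon/2$) plus reindexing on the second. You are in fact somewhat more explicit than the paper on the points it leaves implicit, namely the identification $(T_1\leq S_{n,\epsilon})=(T_1\leq R_{n,0}^\epsilon)$ needed to invoke the shift lemma and the padding by zeros needed to match the infinite-sum form of Theorem \ref{th:stop time}.
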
 
%
\begin{proof} 
We proceed by induction on $n$. 
For $n=1$, by definition $S_{1,\epsilon}=r_{0,\epsilon}(Z_0)\wedge T_1=R_{1,k-1}^\epsilon\wedge S_1$ and the result is true.\\
Suppose the result holds for $n-1$.
From the definition, on the event $(r_{n-1,\epsilon/2}(Z_0)=R_{n,0}^\epsilon<S_1)$, one has $S_{n,\epsilon}=R_{n,0}^\epsilon$ and $R_{n,1}^\epsilon=0<S_2$, $R_{n,2}^\epsilon=0<S_3$, and so on, so that $\sum_{k=1}^n R_{n,k-1}^\epsilon\wedge S_k=R_{n,0}^\epsilon$ and the result is valid.\\
On the event $(R_{n,0}^\epsilon\geq S_1)$, by definition $S_{n,\epsilon}=T_1+\theta(T_1)S_{n-1,\epsilon/2}$. 
Now, the induction hypothesis and the previous lemma yield
\begin{align*}
T_1+\theta(T_1)S_{n-1,\epsilon/2}
&=S_1+\sum_{k=1}^{n-1} R_{n,k}^{\epsilon}\wedge S_{k+1}
=\sum_{k=1}^n R_{n,k-1}^\epsilon\wedge S_k,
\end{align*}
on $(R_{n,0}^\epsilon\geq S_1)$. Hence the result.
\end{proof}

%
\subsection{Characterization of the value function} 
%
We can now propose an iterative construction of value functions by iterating operator $\LL$. For all $x\in E$, set
%
\begin{equation} 
\begin{aligned}  
\begin{cases} \Vmesure{0}(x)&=g(x), \\
\Vmesure{n}(x)&=\LLmesure(\Vmesure{n-1},g)(x) \text{ for all }n \geq 1.
\end{cases}
\end{aligned}  
\label{eq:rec:gu}
\end{equation}
 Clearly the functions $\Vmesure{n}$ are in $B(E)$.
We can now state and prove our main result, namely that the value function $\mathbb{V}$ of the optimal stopping problem (\ref{eq:reward}) equals the $N$-th iterate $\Vmesure{N}$ and that $S_{N,\epsilon}$ is an $\epsilon$-optimal stopping time.
%
\begin{thm} \label{thm:rec} 
Let $x$ be in $E$ and $n\in \NN$. Then one has, for all $\epsilon>0$, $S_{n,\epsilon}$ is a stopping-time in $\mathcal{M}_n$ and
\begin{eqnarray}  \label{gu:a} 
\Vmesure{n}(x)=\sup_{S\in \mathcal{M}_n}\Esp_{x}[g(X_S)],&    \\ 
\Esp_{x}[g(X_{S_{n,\epsilon}})]\geq \Vmesure{n}(x)-\epsilon. \label{gu:b}
\end{eqnarray}
\end{thm}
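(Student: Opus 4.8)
The plan is to argue by induction on $n$, using the strong Markov property and the structure of stopping times established in Theorem~\ref{th:stop time}. The base case $n=0$ is immediate: since $T_0=0$ we have $\mathcal{M}_0=\{0\}$, so $\sup_{S\in\mathcal{M}_0}\Esp_x[g(X_S)]=g(x)=\Vmesure{0}(x)$. Assume now that \eqref{gu:a}--\eqref{gu:b} hold at rank $n-1$ for every starting point and every precision parameter. That $S_{n,\epsilon}\in\mathcal{M}_n$ is already granted by Lemma~\ref{lem:stopp time}, so it suffices to prove the two inequalities whose conjunction gives \eqref{gu:a} together with \eqref{gu:b}: the upper bound $\Esp_x[g(X_S)]\le\Vmesure{n}(x)$ for every $S\in\mathcal{M}_n$, and the lower bound $\Esp_x[g(X_{S_{n,\epsilon}})]\ge\Vmesure{n}(x)-\epsilon$. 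Indeed, applying the lower bound to $S_{n,\epsilon}\in\mathcal{M}_n$ forces $\sup_{S\in\mathcal{M}_n}\Esp_x[g(X_S)]\ge\Vmesure{n}(x)-\epsilon$ for all $\epsilon>0$, hence $\ge\Vmesure{n}(x)$, while the upper bound yields the reverse inequality.

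For the upper bound I would take an arbitrary $S\in\mathcal{M}_n$ and write, via Theorem~\ref{th:stop time}, $S=\sum_{k\ge1}R_{k-1}\wedge S_k$ with each $R_{k-1}$ being $\mathcal{F}_{T_{k-1}}$-measurable; in particular $R_0$ is $\mathcal{F}_0$-measurable, hence a deterministic function of $x$ under $\proba_x$. Splitting according to whether the process is stopped before or after the first jump, that is on $\{R_0<S_1\}$ against $\{R_0\ge S_1\}$, and conditioning at $T_1$ through the strong Markov property, one obtains
\[ \Esp_x[g(X_S)]=\HH g(x,R_0)+\II W(x,R_0)=\JJ(W,g)(x,R_0), \]
where $W(z):=\Esp_z[g(X_{S'})]$ is the continuation payoff of the shifted time $S'=\theta(T_1)S$, which lies in $\mathcal{M}_{n-1}$ because $S\le T_n$ gives $S'\le T_{n-1}$ and because of the shift behaviour recorded above. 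By the induction hypothesis $W\le\Vmesure{n-1}$, and since $\II$ and $\KK$ are monotone (they average against probability kernels), this gives $\JJ(W,g)(x,R_0)\le\sup_{t\ge0}\JJ(\Vmesure{n-1},g)(x,t)\vee\KK\Vmesure{n-1}(x)=\LL(\Vmesure{n-1},g)(x)=\Vmesure{n}(x)$, the case $R_0\ge t^*(x)$ being absorbed into the $\KK$ term through the boundary identity $\JJ(w,g)(x,t^*(x))=\KK w(x)$.

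For the lower bound I would again condition at the first jump, now for the explicit time $S_{n,\epsilon}$. With $R_{n,0}^\epsilon=r_{n-1,\epsilon/2}(x)$ and the decomposition $S_{n,\epsilon}=R_{n,0}^\epsilon$ on $\{T_1>R_{n,0}^\epsilon\}$ and $S_{n,\epsilon}=T_1+\theta(T_1)S_{n-1,\epsilon/2}$ otherwise, the strong Markov property together with the induction hypothesis $\Esp_z[g(X_{S_{n-1,\epsilon/2}})]\ge\Vmesure{n-1}(z)-\tfrac{\epsilon}{2}$ yields
\[ \Esp_x[g(X_{S_{n,\epsilon}})]\ge\HH g(x,R_{n,0}^\epsilon)+\II\Vmesure{n-1}(x,R_{n,0}^\epsilon)-\tfrac{\epsilon}{2}=\JJ(\Vmesure{n-1},g)(x,R_{n,0}^\epsilon)-\tfrac{\epsilon}{2}. \]
It then remains to inspect the two branches defining $r_{n-1,\epsilon/2}$: when $\KK\Vmesure{n-1}(x)$ dominates, $R_{n,0}^\epsilon=t^*(x)$ and $\JJ(\Vmesure{n-1},g)(x,t^*(x))=\KK\Vmesure{n-1}(x)=\Vmesure{n}(x)$; otherwise $R_{n,0}^\epsilon$ is chosen so that $\JJ(\Vmesure{n-1},g)(x,R_{n,0}^\epsilon)\ge\sup_{t}\JJ(\Vmesure{n-1},g)(x,t)-\tfrac{\epsilon}{2}=\Vmesure{n}(x)-\tfrac{\epsilon}{2}$. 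In both cases the right-hand side is at least $\Vmesure{n}(x)-\epsilon$, closing the induction; the geometric halving of the precision across the levels is precisely what keeps the accumulated error below $\epsilon$.

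The delicate points I expect to be the main obstacles are twofold. First, the repeated application of the strong Markov property through the shift $\theta(T_1)$, together with the verification that $\theta(T_1)S$ and $\theta(T_1)S_{n-1,\epsilon/2}$ are admissible stopping times in $\mathcal{M}_{n-1}$; this is where Theorem~\ref{th:stop time} and the shift identity do the real work, and where one must be careful that conditional expectations are taken on the correct events. Second, the regularity required for the $\epsilon$-optimal construction: one must check that $t\mapsto\JJ(\Vmesure{n-1},g)(x,t)$ is right-continuous, so that the infimum defining $r_{n-1,\epsilon/2}$ genuinely attains $\JJ\ge\sup-\tfrac{\epsilon}{2}$, and that the boundary identity $\JJ(w,g)(x,t^*(x))=\KK w(x)$ holds, including the degenerate case $t^*(x)=+\infty$, which should be treated by a limiting argument relying on the no-explosion Assumption~\ref{ass:T}.
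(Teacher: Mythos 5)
Your proof follows the same route as the paper's: induction on $n$, the representation of stopping times from Theorem~\ref{th:stop time} to reduce the upper bound to $\JJ(W,g)(x,R_0)$ via a one-step decomposition at $T_1$, the strong Markov property to identify the continuation payoff with the value at rank $n-1$, and the two-case analysis of $r_{n-1,\epsilon/2}$ with geometric halving of the precision for the lower bound. The only substantive flaw is the ``boundary identity'' $\JJ(w,g)(x,t^*(x))=\KK w(x)$, which you invoke twice and which is false in general. Indeed
\begin{equation*}
\JJ(w,g)(x,t^*(x))=g\bigl(\flotmesure(x,t^*(x))\bigr)\,\Exp^{-\Tauxdesautmesure(x,t^*(x))}+\IImesure w(x,t^*(x)),
\end{equation*}
whereas
\begin{equation*}
\KK w(x)=\noyaumesure w\bigl(\flotmesure(x,t^*(x))\bigr)\,\Exp^{-\Tauxdesautmesure(x,t^*(x))}+\IImesure w(x,t^*(x)):
\end{equation*}
the boundary term carries the reward $g$ evaluated at the boundary point in the first expression, and the post-(forced-)jump continuation $\noyaumesure w$ in the second, and these need not coincide. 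This discrepancy is precisely why the operator $\LL$ is defined as the maximum of $\sup_{t\geq 0}\JJ$ and $\KK$ taken separately, rather than by extending the supremum to include $t=t^*(x)$. The repair is immediate and is what the paper implicitly does: since $F(x,t)=0$ for $t\geq t^*(x)$, one has $S_1\leq t^*(x)$ $\proba_x$-a.s., so on the event $\{R_0\geq t^*(x)\}$ the stopped time satisfies $S\wedge T_1=T_1$ a.s.\ and the expected payoff is computed directly as $\Esp_x[W(Z_1)]=\KK W(x)\leq \KK\Vmesure{n-1}(x)\leq\Vmesure{n}(x)$; similarly, in the branch where $\KK\Vmesure{n-1}(x)$ dominates, $S_{n,\epsilon}\wedge T_1=t^*(x)\wedge T_1=T_1$ and $\Esp_x[\Vmesure{n-1}(Z_1)]=\KK\Vmesure{n-1}(x)=\Vmesure{n}(x)$ exactly, with no appeal to $\JJ$ evaluated at $t^*(x)$. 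The remaining points you flag as delicate (admissibility of the shifted stopping times, right-continuity of $t\mapsto\JJ(\Vmesure{n-1},g)(x,t)$ needed for the infimum defining $r_{n-1,\epsilon/2}$ to deliver the $\epsilon/2$-optimality) are indeed the places where care is required; the paper glosses over the second one as well.
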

%
\begin{proof} 
By an induction argument we will prove Equation \eqref{gu:b} and the following inequality
\begin{equation} \label{gu:aa} 
\forall S \in \mathcal{M},\quad \forall n\in\NN, \qquad\Esp_{x}[g(X_{S\wedge T_n})] \leq \Vmesure{n}(x).
\end{equation}%
These two equations imply \eqref{gu:a}. Indeed, for all $S$ in $\mathcal{M}_{\infty}$, we have $S\wedge T_n \in \mathcal{M}_n$ then for all  $S$ in $\mathcal{M}_{\infty}$, $n$ in $\NN$ and $x$ in $E$, \eqref{gu:b} and \eqref{gu:aa} yield
$$
\Vmesure{n}(x)-\epsilon \leq  \Esp_{x}[g(X_{S_{n,\epsilon}})]  \leq \sup_{S\in \mathcal{M}_n} \Esp_{x}[g(X_{S})] = \sup_{S\in \mathcal{M}_n} \Esp_{x}[g(X_{S\wedge T_n})] \leq \Vmesure{n}(x),
$$
which is valid for all positive $\epsilon$, hence the result.\\

It remains now to prove \eqref{gu:aa} and \eqref{gu:b} by induction.\\

The case $n=1$ is based on Theorem \ref{th:stop time}. Indeed,  we obtain $ \Esp_{x}[g(X_{S\wedge T_1})] =\Esp_{x}[g(X_{R_0\wedge T_1})]$, for some $\mathcal{F}_0$-measurable random variable $R_0$. From this, we deduce 
\begin{align*}
 \Esp_{x}[g(X_{S\wedge T_1})] 
 &=\Esp_{x}[g(X_{R_0})\indic_{T_1 > R_0}] +\Esp_{x}[g(Z_1)\indic_{T_1\leq R_0}]\\
 &= \HHmesure g (x,R_0)+ \IImesure g (x,R_0) = \JJmesure(g,g)(x,R_0)\\
 &\leq \sup_{t\geq 0} \left\{ \JJmesure(g,g)(x,t) \right\}\leq  \Vmesure{1}(x);
 \end{align*}
which proves \eqref{gu:aa} for $n=1$. To prove \eqref{gu:b}, we distinguish two cases:\\
$\bullet$ if $ \KKmesure g (x) > \sup_{t\geq 0}\JJmesure(g,g)(x,t) $, then $\Vmesure{1}(x)=\KKmesure g(x)=\Esp_{x}[g(Z_1)]$ and also 
$$
S_{1,\epsilon}=t^*(x) \land T_1=T_1.
$$
Thus  $ \Esp_{x}[g(X_{S_{1,\epsilon}})]=\Esp_{x}[g(Z_1)]=\Vmesure{1}(x)\geq \Vmesure{1}(x)-\epsilon$. \\
$\bullet$ otherwise, $\Vmesure{1}(x)=\sup_{t\geq 0}\JJmesure(g,g)(x,t)$ and, by definition of $\JJmesure$, 
\begin{align*}
 \Esp_{x}[g(X_{S_{1,\epsilon}})]
 &=\Esp_{x}[g(X_{r_{0,\epsilon}(x)\land T_1})]=\Esp_{x}[g(X_{r_{0,\epsilon}(x)})\indic_{T_1 \geq r_{0,\epsilon}(x)}] +\Esp_{x}[g(Z_1)\indic_{T_1\leq r_{0,\epsilon}(x)}]\\
 &=\JJmesure(g,g)(x,r_{0,\epsilon}(x)).
 \end{align*}
We then deduce, by definition of $r_{0,\epsilon}$: 
\[ \Esp_{x}[g(X_{S_{1,\epsilon}})] \geq \sup_{t\geq 0}\{ \JJmesure(g,g)(x,t)\} -\epsilon=\Vmesure{1}(\mu)-\epsilon. \]
This completes the proof of  \eqref{gu:b} for $n=1$.\\

Now, let $N\geq 1$, suppose that \eqref{gu:aa} and \eqref{gu:b} hold true for all $n\leq N$ and prove that they hold for $n=N+1$.

Begin by proving \eqref{gu:aa}. Again, this is based on Theorem \ref{th:stop time}. As $S$ is a stopping-time, it can be decomposed as 
\begin{equation*}
S=\sum_{n=1}^\infty R_{n-1}\wedge S_n.
\end{equation*}
where $(R_n)_{n\in \mathbb N}$ are non-negative $(\mathcal{F}_{T_n})_{n\in \mathbb N}$-adapted random variables. In particular, on $(S\geq T_1)$ one has 
\begin{equation*}
S=S_1+\sum_{n=2}^\infty R_{n-1}\wedge S_n=T_1+\theta(T_1)(S'),
\end{equation*}
for some stopping-time $S'$.
Thus one has
\begin{align} 
\Esp_{x}[g(X_{S\wedge T_{n+1}})]
&= \Esp_{x}[g(X_{S\wedge T_{1}})\indic_{S<T_1}]+ \Esp_{x}[g(X_{S\wedge T_{n+1}})\indic_{S\geq T_1}] \notag\\
&= \Esp_{x}[g(X_{R_0\wedge T_{1}})\indic_{R_0<T_1}]+ \Esp_{x}[g(X_{S\wedge T_{n+1}})\indic_{R_0\geq T_1}] \notag\\
&=\Esp_{x}[g(X_{R_0})\indic_{R_0<T_1}]+ \Esp_{x}[\Esp_{x}[g(X_{(T_1+\theta(T_1)(S'))\wedge T_{n+1}})\vert \mathcal{F}_{1}]\indic_{R_0\geq T_1}]\notag\\
&=\Esp_{x}[g(X_{R_0})\indic_{R_0<T_1}]+ \Esp_{x}[\Esp_{Z_1}[g(X_{ S'\wedge T_{n}})]\indic_{R_0\geq T_1}] \label{demo:eq:st}\\
&\leq \Esp_{x}[g(X_{R_0})\indic_{R_0<T_1}]+ \Esp_{x}[ \Vmesure{n}(Z_1) \indic_{R_0\geq T_1}]\label{demo:eq:hyp} \\
&\leq \sup_{r\geq 0} \Esp_{x}[g(X_{r})\indic_{r<T_1}]+ \Esp_{x}[ \Vmesure{n}(Z_1) \indic_{r\geq T_1}]\notag\\
&\leq \sup_{r\geq 0} \JJmesure (\Vmesure{n},g)(x,r) \lor \KKmesure \Vmesure{n}(x) =\Vmesure{n+1}(x). \notag
\end{align} 
Line \eqref{demo:eq:st} is obtained thanks to the strong Markov property. The induction hypothesis is applied in line \eqref{demo:eq:hyp}.  This achieves the induction for \eqref{gu:aa}.\\

Let us now prove  \eqref{gu:b}. One has
\ali{\Esp_{x}[g(X_{S_{n+1,2\epsilon}})] &=\Esp_{x}[g(X_{S_{n+1,2\epsilon}})\indic_{R_{n,0}^\epsilon<T_1}]+\Esp_{x}[g(X_{S_{n+1,2\epsilon}})\indic_{R_{n,0}^\epsilon\geq T_1}] \\
&=\Esp_{x}[g(X_{R_{n,0}^\epsilon})\indic_{R_{n,0}^\epsilon<T_1}]+\Esp_{x}[g(X_{T_1+\theta(T_1)S_{n,\epsilon}})\indic_{R_{n,0}^\epsilon\geq T_1}] \\
&=\Esp_{x}[g(X_{R_{n,0}^\epsilon})\indic_{R_{n,0}^\epsilon<T_1}]+\Esp_{x}[\Esp_{Z_1}[g(X_{S_{n,\epsilon}})]\indic_{R_{n,0}^\epsilon\geq T_1}] \\
&\geq \Esp_{x}[g(X_{R_{n,0}^\epsilon})\indic_{R_{n,0}^\epsilon<T_1}]+ \Esp_{x}[\Vmesure n (Z_1)\indic_{R_{n,0}^\epsilon\geq T_1}]-\epsilon \times \proba_{x}(R_{n,0}^\epsilon\geq T_1) \\
&\geq\Esp_{x}[g(X_{R_{n,0}^\epsilon})\indic_{R_{n,0}^\epsilon<T_1}]+ \Esp_{x}[\Vmesure n (Z_1)\indic_{R_{n,0}^\epsilon\geq T_1}]-\epsilon.}
From the definition of $R_{n,\epsilon}$ one readily obtains\\
$\bullet$  if $ \KKmesure \Vmesure n (x) > \sup_{t\geq 0}\JJmesure(\Vmesure n,g)(x,t) $, then $\Vmesure{n+1}(x)=\KKmesure \Vmesure n (x)=\Esp_{x}[\Vmesure n (Z_1)]$ and $R_{n,0}^\epsilon=t^*(x)$. So  the previous inequality becomes
\begin{align*}
\Esp_{x}[g(X_{S_{n+1,2\epsilon}})]&\geq \Esp_{x}[g(X_{t^*(x)})\indic_{t^*(x)<T_1}]+ \Esp_{x}[\Vmesure n (Z_1)\indic_{T_1\leq t^*(x)}]-\epsilon\\
 &= \KKmesure \Vmesure n (x) - \epsilon= \Vmesure{n+1}(x)-\epsilon. 
\end{align*}
$\bullet$ Otherwise, $\Vmesure{n+1}(x)=\sup_{t\geq 0}\JJmesure(\Vmesure n,g)(x,t)$, and the previous inequality becomes
\begin{align*}
\Esp_{x}[g(X_{S_{n+1,2\epsilon}})] & \geq \JJmesure(\Vmesure n,g)(x,r_{0,\varepsilon}(x)) - \varepsilon\\
&\geq  \sup_{t\geq 0}\JJmesure(\Vmesure n,g)(x,t) - \varepsilon - \varepsilon = \Vmesure{n+1}(x)- 2\varepsilon.
\end{align*}
%
Thanks to these two cases, we prove \eqref{gu:b} and we end the proof.
\end{proof}
%
\section{Comparaison between the tagged cell and measure-valued process} \label{sec:compa}
In this section, we investigate wether stopping a single well chosen individual is equivalent to stopping the whole population.
When dealing with a branching population, some important characteristics of the global population, e.g. laws of large numbers for functionals of the individuals, can be obtained by simply studying a suitably weighted random tagged lineage, by means of many-to-one formulas, see e.g. \cite{Ber06, Cl17,DHKR15,Guy07}. 
We prove that this property does not hold true for the optimal stopping problem. We use the simple toy example of cell division from Section \ref{ex} to show that stopping a suitably chosen tagged cell and the whole population yield different value functions.
%
\subsection{Tagged cell and many-to-one formula}
Let us rapidly describe the many-to-one formula and the definition of the tagged as presented in \cite{DHKR15}. Heuristically, picking a branch uniformly at random along the genealogical tree describe the path of a tagged cell whose the behaviour is similar to the one of an individual picked at time $t$.

 More precisely, for any measurable positive function $h$ and any $t\geq 0$ we have the following formula, commonly called \emph{many-to-one} formula
\begin{equation} \label{many-to-one} \Esp_{x}\left[\sum_{u\in\mathcal{U}} X_t(h) \Exp^{-r t} \right]=\sum_{i=1}^n\Esp_{x_i}\left[\frac{h(\traitmarque_t)}{\traitmarque_t} \right]\times x_i, 
\end{equation}
where $x=\mu= \sum_{i=1}^n \delta_{(x_i,0)}$, $(X_t)_{t\geq 0}$ is the measure-valued PDMP of Section~\ref{sect:extous}, and $(\traitmarque_t)_{t\geq 0}$ is the tagged process, representing the evolution of the size of the tagged cell. Its dynamics is as a real-valued PDMP described in Section~\ref{sect:ex1} but whose parameters are different from those of $(X_t)_{t\geq 0}$; see \cite{ Ber06,Cl17,DHKR15} for details. 

To compare the value functions of the tagged cell and the measure-valued process, we will impose the form of our reward functions based on Equation~\ref{many-to-one}. For the tagged cell, we choose a bounded nonnegative function $f$ continuous along the flow. For the process $(X_t)_t$, the reward function $g$ has the form
\begin{equation} 
g: \sum_{i=1}^n \delta_{(x_i,t)}  \in E \mapsto \sum_{i=1}^n f(x_i) \Exp^{-rt}, 
\label{eq:reward}
 \end{equation}
where we used the time-augmented process defined in Section \ref{sec:aug} to take the time dependence into account.

\subsection{Comparison of the value functions}
Numerically computing the value function $\mathbb V_{N}$ is very demanding, as for each iteration of the dynamic programming operators, one needs to compute the functions on the whole state space $E$, at least at first sight. Actually, when dealing with the optimal stopping problem with horizon $N$, the dynamic programming recursion on functions $\mathbb V_{n}$ can be rewritten as a recursion on the random variables $\mathbb V_{k}(Z_{N-k})$. Still, the recursion is numerically intractable as one needs to compute conditional expectations at each step. In order to avoid such intricacies, we simply consider the optimal stopping problem with horizon $N=1$ jump.

We thus have to compute
\[ \begin{aligned}
 \Vmesure{0}(Z_1)&=g(Z_1), \\
\Vmesure{1}(Z_0)&=\sup_{t\geq 0} \left\{\Esp_{Z_0}[\Vmesure{0}(Z_1)\indic_{T_1 \leq t}]+g\circ\Phi(Z_{0},t)\times \proba_{Z_0}(T_1>t) \right\} \lor\Esp_{Z_0}[\Vmesure{0}(Z_1)].
\end{aligned} \]
%
The only quantities to look at are $\Vmesure{0}(Z_1)=g(Z_1)$ and $\Vmesure{1}(Z_0)=\Vmesure{1}(\mu)$. Moreover, this last expression is deterministic if we choose a deterministic $Z_0$. More specifically, one has
\begin{equation}  \Vmesure{0}(Z_1)=\Exp^{-rT_1}\left[\sum_{i \neq I_1}(x_i\Exp^{r T_1}-\gamma)\indic_{x_i<\gamma\Exp^{-r T_1}}+(x_{I_1}\Exp^{rT_1}-2\gamma)\indic_{x_{I_1}<2\gamma\Exp^{-rT_1}}+(n+1)\gamma \right], \label{eq:V0}
 \end{equation}
 and
\begin{equation}\begin{aligned} &\Vmesure{1}(Z_0)=\\
&\sup_{t>0}\left\{ \Esp_{x}\left[\Exp^{-rt}\left(\sum_{i \neq I_1}(x_i\Exp^{r T_1}-\gamma)\indic_{x_i<\gamma\Exp^{-r T_1}}+(x_{I_1}\Exp^{rT_1}-2\gamma)\indic_{x_{I_1}<2\gamma\Exp^{-rT_1}}+(n+1)\gamma \right)\indic_{T_1\leq t}\right] \right. \\
&+\left. \Exp^{-rt}\left(\sum_{i}(x_i\Exp^{r t}-\gamma)\indic_{x_i<\gamma\Exp^{-r t}}+n\gamma \right)\exp\left(-\sum(x_i^\alpha)\frac{\Exp^{rt\alpha}-1}{\alpha r} \right) \right\}\lor\Esp_{x}[\Vmesure{0}(Z_1)],\end{aligned}\label{eq:V1}\end{equation}
where the first jump time $T_1$ is distributed as \eqref{eq:loiT1} and the random variable $I_1$ is the rank of the split cell. Its distribution is given by $\proba_{\mu}(I_1=j)=\frac{x_j^\alpha}{x_1^\alpha+\ldots+x_n^\alpha}$. We can numerically simulate $\Vmesure{0}(Z_1)$. For $\Vmesure{1}(\mu)$, we fix a (large enough) maximum time and discretize the time interval in order to compute (an approximation of) the supremum. 
In the same way, the value function for the tagged cell is given by:
 \ali{ \Vmarque{1}(x)&=\sup_{t\leq 0}\left\{ \left[(x\Exp^{rt}-\gamma)\indic_{x\Exp^{rt}<\gamma}+\gamma\right]\exp\left(-\frac{x^{\alpha}}{\alpha r}(\Exp^{r\alpha t}-1)\right) \right.\\
&+\left. \Esp_{x}\left[ \left( (x\Exp^{r T_1}/2-\gamma) \indic_{x\Exp^{r T_1}<2\gamma}+\gamma \right)\indic_{T_1 \leq t} \right]  \right\} \lor \Esp_x\left[\Vmarque{0}(\Zmarque_1) \right].}

For the numerical results, we use the following parameters: $\alpha=1$, $r=2$, $\gamma=1$. For the discretization step, we evaluate the maximum value of $T_1$ by the Monte-Carlo simulations and we divide it by $nbpt$ which corresponds to the number of discretization points for the evaluation of the supremum. Here, $nbpt=10000$ and the Monte Carlo sample size is $N=100000$. We obtain the following approximation 
\begin{equation}
\begin{aligned}
 \Vmarque{1}(3)=1.0018 \pm 2.54\times10^{-4} \\
 \Vmesure{1}(\delta_3)=  1.3447 \pm 3.6034\times10^{-4} .
 \end{aligned}
\end{equation}
Hence, even on this simple toy example where the cost functions $g$ and $h$ match and with a short one-step horizon, the optimal performance for the global population and the tagged cell differ.
It will be the object of future work to design specific numerical approximations of the value function of the global measure-valued population.


\end{document}